\newtheorem{theorem}{Theorem}[section]
\newtheorem{lemma}[theorem]{Lemma}
\newtheorem{corollary}[theorem]{Corollary}
\newtheorem{proposition}[theorem]{Proposition}
\newtheorem{conjecture}[theorem]{Conjecture}
\newtheorem{question}[theorem]{Question}
\newenvironment{proof}{\normalsize {\sc Proof}.}{{\hfill $\Box$%
    \hskip - \parfillskip\bigskip}}
\newcommand{\Aut}{\mathop{\rm Aut}\nolimits}
\newcommand{\LL}{\mathop{\rm LL}\nolimits}
\newcommand{\Br}{\mathop{\rm Br}\nolimits}
\newcommand{\Rad}{\mathop{\rm Rad}\nolimits}
\def\bigcp{\mathop{\mathchoice 
    {\hbox{\sf\Large\lower 0.1\baselineskip\hbox{Y}}}%
    {\hbox{\sf\large\lower 0.1\baselineskip\hbox{Y}}}%
    {\hbox{\sf\normalsize\lower 0.1\baselineskip\hbox{Y}}}%
    {\hbox{\sf\tiny\lower 0.1\baselineskip\hbox{Y}}}%
}}
\def\bigtimes{\mathop{\mathchoice 
    {\hbox{\sf\Large\lower 0.1\baselineskip\hbox{X}}}%
    {\hbox{\sf\large\lower 0.1\baselineskip\hbox{X}}}%
    {\hbox{\sf\normalsize\lower 0.1\baselineskip\hbox{X}}}%
    {\hbox{\sf\tiny\lower 0.1\baselineskip\hbox{X}}}%
}}
\def\Sym(#1){\mathop{\rm Sym}(#1)}
\def\Sym(#1){S_{#1}}
\def\diag(#1){\mathop{\rm diag}(#1)}
\newenvironment{enumerate*}{%
     \begin{enumerate}%
     }%
    {\end{enumerate}}
\begin{document}

\title{Loewy lengths of blocks with abelian defect groups \footnote{This research was supported by the EPSRC (grant no. EP/M015548/1).} }

\author{Charles W. Eaton\footnote{School of Mathematics, University of Manchester, Manchester, M13 9PL, United Kingdom. Email: charles.eaton@manchester.ac.uk} and Michael Livesey\footnote{School of Mathematics, University of Manchester, Manchester, M13 9PL, United Kingdom. Email: michael.livesey@manchester.ac.uk}}

\date{29th July, 2016}
\maketitle


\begin{abstract}
We consider $p$-blocks with abelian defect groups and in the first part prove a relationship between its Loewy length and that for blocks of normal subgroups of index $p$. Using this,
we show that if $B$ is a $2$-block of a finite group with abelian defect group $D \cong C_{2^{a_1}} \times \cdots \times C_{2^{a_r}} \times (C_2)^s$, where $a_i > 1$ for all $i$ and $r \geq 0$, then $d < \LL(B) \leq 2^{a_1}+\cdots+2^{a_r}+2s-r+1$, where $|D|=2^d$. When $s=1$ the upper bound can be improved to $2^{a_1}+\cdots+2^{a_r}+2-r$. Together these give sharp upper bounds for every isomorphism type of $D$. A consequence is that when $D$ is an abelian $2$-group the Loewy length is bounded above by $|D|$ except when $D$ is a Klein-four group and $B$ is Morita equivalent to the principal block of $A_5$. We conjecture similar bounds for arbitrary primes and give evidence that it holds for principal $3$-blocks.
\end{abstract}


\section{Introduction}

Let $G$ be a finite group and $k$ be an algebraically closed field of characteristic $p$. For a block $B$ of $kG$, write $\LL (B)$ for the Loewy length of $B$, that is, the smallest $n \in \mathbb{N}$ such that $\Rad(B)^n=0$ and write $e_B$ for the block idempotent corresponding to $B$. We are interested in upper and lower bounds for $\LL(B)$ in terms of the isomorphism type of the defect groups. For $p$-solvable groups it is proved in~\cite{wi87} that the Loewy length is at most the order of a defect group and in~\cite{ko84} that it is strictly greater that $(p-1)d$, where $d$ is the defect of $B$. The upper bound does not hold when we remove the $p$-solvability hypothesis, as the principal $2$-block $B_0(kA_5)$ of $A_5$ (with Klein four defect groups) has Loewy length $5$, although it is tempting to think that blocks Morita equivalent to $B_0(kA_5)$ are the only counterexamples, as we will see is indeed the case for $p=2$. As remarked in~\cite{kks14} the lower bound $(p-1)d$ also does not hold when we remove the $p$-solvability hypothesis, although $d$ could still be a lower bound. In this paper we restrict our attention to blocks with abelian defect groups, but investigate bounds on the Loewy length of a block for arbitrary finite groups.

Using the results of~\cite{ekks14}, which depends on the classification of finite simple groups, we show that if $B$ is a $2$-block with abelian defect group $D \cong C_{2^{a_1}} \times \cdots \times C_{2^{a_r}} \times (C_2)^s$, where $a_i > 1$ for all $i$ and $r \geq 0$, then $d< \LL(B) \leq 2^{a_1}+\cdots+2^{a_r}+2s-r+1$, where $|D|=2^d$. When $s=1$ the upper bound can be improved to $2^{a_1}+\cdots+2^{a_r}+2-r$. Together these give sharp upper bounds for every isomorphism type of $D$: take $B$ to be the principal block of $k(C_{2^{a_1}} \times \cdots \times C_{2^{a_r}} \times SL_2(2^s))$ (see~\cite{alp79}). The lower bound was suggested in~\cite{kks14}, where there is an excellent discussion of lower bounds on Loewy lengths. Note that for blocks of $p$-solvable groups with abelian defect groups, we have $\LL(B) = \LL(kD)$.

A crucial element in establishing the above bounds is the consideration of the case where there is $N \lhd G$ such that $G=ND$. For $D$ elementary abelian we may apply the main result of~\cite{kk96}, which says that if $G$ is a split extension of $N$ (by a direct factor of $D$), then $B$ behaves as if $N$ were a direct factor of $G$. We generalize this result and use this to show how for arbitrary abelian $D$ we may compare the Loewy lengths of $B$ and the block of $N$ covered by $B$. This works for all primes, and we hope is of wider interest.

The paper is structured as follows. In Section \ref{indexp} we give the generalization of the theorem of~\cite{kk96} and the theorem concerning Loewy length of blocks when there is a normal subgroup of index $p$. In Section \ref{prelim} we give some preliminary results needed for the proof of the bounds, which we give in Section \ref{proof}. In Section \ref{otherprimes} we consider similar bounds for odd primes and make a conjecture.


\section{Normal subgroups of index $p$}
\label{indexp}

We first prove a generalisation of a theorem of Koshitani and K\"{u}lshammer~\cite{kk96}.

\begin{theorem}\label{thm:indexp}
Let $G$ be a finite group and $B$ a block of $kG$ with abelian defect group $D$ with primary decomposition $D=D_1\times\cdots\times D_t$. Let
$N\vartriangleleft G$ such that $G=ND_r$ for some $r\leq t$, $D_j\leq N$ for all $j\neq r$ and
$[G:N]=p$. Let $b$ be a block of $kN$ covered by $B$. Then $e_B=e_b$ and there exists an element $a\in Z(B)$ of
multiplicative order dividing $|D_r|$ such that $B=\bigoplus_{j=0}^{p-1}a^jkNe_b$.
\end{theorem}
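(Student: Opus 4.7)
The plan is to show, in order, (i) that $b$ is $G$-stable, (ii) that $e_B=e_b$, and (iii) to construct $a$ as a unit-corrected lift of a generator of $D_r$. For (i): since $B$ covers $b$, some defect group $D^*$ of $B$ lies in $T_G(b)$, and because $D^*$ is $G$-conjugate to $D$ we have $G=ND=ND^*\leq NT_G(b)=T_G(b)$; hence $b$ is $G$-stable and $e_b\in Z(kG)$. For (ii): with $b$ being $G$-stable and $G/N\cong C_p$ a $p$-group, a standard Clifford-theoretic argument (the blocks of $kG$ over $b$ are parameterised by the blocks of a twisted group algebra $k_\alpha[G/N]$, which in characteristic $p$ for a $p$-group is always local) shows that $B$ is the unique block of $kG$ covering $b$, so $e_B=e_b$.

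For (iii), let $x$ be a generator of the cyclic group $D_r$, so $x^{p^{a_r}}=1$ and $y:=x^p\in D_r\cap N$. Conjugation by $x$ induces a $k$-algebra automorphism $\phi$ of $kNe_b$ (well-defined because $x$ fixes $e_b$ by $G$-stability). The key claim — and the principal obstacle of the whole argument — is that $\phi$ is an inner automorphism: there exists $u\in(kNe_b)^\times$ with $\phi(n)=unu^{-1}$ for every $n\in kNe_b$. This does not follow from Clifford theory alone and is the heart of the generalisation of \cite{kk96}; it uses in an essential way that $x$ lies in the abelian defect group $D$ of $B$. I would argue it via source-algebra theory, obtaining $u$ from the inner action of $D$ on a source algebra of $b$ for a suitably chosen source idempotent and then transporting $u$ back to $kNe_b$ through the Morita equivalence. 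Given such a $u$, the freedom to replace $u$ by $uz$ for $z\in Z(kNe_b)^\times$, combined with $\phi^{p^{a_r}}=1$ (which forces $u^{p^{a_r}}\in Z(kNe_b)^\times$) and with $k$ being algebraically closed, lets us rescale so that $u^{p^{a_r}}=e_b$.

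Setting $a=u^{-1}x$ then finishes the proof. Since $\phi$ is conjugation by $u$ we have $\phi(u)=u$, and a straightforward induction yields $a^j=u^{-j}x^j$; hence $a^{p^{a_r}}=u^{-p^{a_r}}x^{p^{a_r}}=e_b$ and $a$ has multiplicative order dividing $|D_r|$. The identity $an=u^{-1}\phi(n)x=nu^{-1}x=na$ shows $a$ commutes with every $n\in kNe_b$, and $ax=xa$ follows from $\phi(u)=u$, so $a\in Z(B)$. Finally $a^jkNe_b=x^jkNe_b$ for each $j$, and combining with the obvious decomposition $B=\bigoplus_{j=0}^{p-1}x^jkNe_b$ yields the required direct sum $B=\bigoplus_{j=0}^{p-1}a^jkNe_b$.
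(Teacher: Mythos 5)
Your outline correctly reduces the theorem to the claim that conjugation by a generator $x$ of $D_r$ induces an \emph{inner} automorphism $\phi$ of $kNe_b$, and the bookkeeping you do after that reduction (setting $a=u^{-1}x$, checking $a^j=u^{-j}x^j$ from $\phi(u)=u$, verifying centrality and the graded decomposition) is fine. But the innerness of $\phi$ is not an auxiliary detail one can defer to ``source-algebra theory''---it \emph{is} the theorem, as you yourself observe, and the two sentences you devote to it do not constitute an argument. It is not clear that $D$ even acts on a source algebra of $b$ in a way that could be ``transported back'' to $kNe_b$; one would need to say which source idempotent, why the action of $x$ on it is defined, and why the resulting unit of $iBi$ gives a unit of $kNe_b$ conjugating correctly. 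None of this is routine. The paper's actual proof takes a completely different route: it passes to the Brauer correspondent via Watanabe's theorem. Concretely, it works with a root $\mathcal{A}$ of $B$ in $C_G(D)$ and its stabiliser $N_G(D)_\mathcal{A}$, uses Gorenstein's coprime-action results to manufacture an $N_G(D)_\mathcal{A}$-invariant cyclic complement $R\leq D$ of order $|D_r|$ with $[R:R\cap N]=p$, shows that the $p'$-group $N_G(D)_\mathcal{A}/C_G(D)$ must centralise $R$, and then invokes Watanabe's isomorphism $f\colon Z(B)\to Z(\alpha)$ to pull $e_\alpha g$ (with $R=\langle g\rangle$ central in the target) back to the desired element $a$. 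The order of $a$ is then automatic because $f$ is an algebra isomorphism. Your approach has no analogue of this mechanism.

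There is also a secondary, but genuine, flaw in your normalisation step. You observe $u^{p^{a_r}}\in Z(kNe_b)^\times$ and claim that, because $k$ is algebraically closed, one can rescale by a central unit $z$ to force $(uz)^{p^{a_r}}=e_b$. This requires every element of $Z(b)^\times$ to be a $p^{a_r}$-th power, which is false in general: in characteristic $p$ the $p^{a_r}$-power map sends $\lambda e_b+j$ (with $j\in\mathrm{rad}\,Z(b)$) to $\lambda^{p^{a_r}}e_b+j^{p^{a_r}}$, and once $p^{a_r}$ exceeds the nilpotency degree of $\mathrm{rad}\,Z(b)$ the image lands in $k^\times e_b$ only, so most central units are not $p^{a_r}$-th powers. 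Algebraic closure of $k$ fixes the scalar part but does nothing for the radical part. So even granting an inner $u$, your argument does not produce one of the stated multiplicative order; you would need to show that the particular $u$ you construct already satisfies $u^{p^{a_r}}\in k^\times e_b$, which again requires the very analysis you have omitted.
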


\begin{proof}
We follow the proof of~\cite{kk96}, where it is assumed that $D_r\cap N=1$. Let $\mathcal{A}$ be a root of $B$ in
$C_G(D)$ and $N_G(D)_\mathcal{A}$ the stabilizer of $e_\mathcal{A}$ in $N_G(D)$. As in~\cite{kk96}, $e_B=e_b$, $Z(B)$ is a crossed
product of $Z(b)$ over $G/N$ and $N_G(D)_\mathcal{A}/C_G(D)$ is a $p'$-group.
\newline
\newline
Now $N_G(D)_\mathcal{A}/C_G(D)$ acts on both $D$ and $D\cap N$ by conjugation. Consider the
subgroup $Q\leq D$ generated by $D_r$ and all its $N_G(D)_\mathcal{A}$-conjugates. This group has exponent $|D_r|$ and so by~\cite[$\S5$, Theorem 2.2]{gor80}
$Q=Q_1\times Q_2$, where $Q_1$ and $Q_2$ are $N_G(D)_\mathcal{A}$-invariant, $Q_1$ is homocyclic of exponent $|D_r|$ and $Q_2$ has exponent strictly less than
$|D_r|$. As $[Q_1:Q_1\cap N]=p$ we have $Q_1\cap N\cong R_1\times R_2$, where $R_1$ is cyclic of order $|D_r|/p$ and $R_2$ is homocyclic of exponent $|D_r|$.
Again by~\cite[$\S5$, Theorem 2.2]{gor80} we can assume $R_1$ and $R_2$ are both $N_G(D)_\mathcal{A}$-invariant and therefore
by~\cite[$\S3$, Theorem 3.2]{gor80} $R_2$ must have an $N_G(D)_\mathcal{A}$-invariant complement $R$ in $Q_1$. So $R$ is cyclic, $N_G(D)_\mathcal{A}$-invariant,
$|R|=|D_r|$ and $[R:R\cap N]=p$.
\newline
\newline
Now $|\Aut(R)|_{p'}=p-1$ and the natural homomorphism $\Aut(R)\to\Aut(R/(R\cap N))$ is surjective and
so any non-trivial $p'$-element of $\Aut(R)$ acts non-trivially on $R/(R\cap N)$. However, $[R,N_G(D)_\mathcal{A}]\leq R\cap N$ and
so every element of $N_G(D)_\mathcal{A}$ acts trivially on $R/(R\cap N)$. Therefore, $N_G(D)_\mathcal{A}$ commutes with $R$.
\newline
\newline
Let $\alpha=\mathcal{A}^{C_G(C_D(N_G(D)_\mathcal{A}))}$. By Watanabe's result~\cite[Theorem 2(ii)]{wat89}, the map
\begin{align*}
f:Z(B)&\to Z(\alpha),\\
z&\mapsto\Br_{C_D(N_G(D)_\mathcal{A})}(z)e_\alpha
\end{align*}
is an isomorphism of $k$-algebras. Note that since $R$ commutes with $N_G(D)_\mathcal{A}$ we have that $R\leq Z(C_G(C_D(N_G(D)_\mathcal{A})))$ and also
$C_G(C_D(N_G(D)_\mathcal{A}))=RC_N(C_D(N_G(D)_\mathcal{A}))$. Therefore $\alpha$ and hence $Z(\alpha)$ are $G/N$-graded. Recall that $Z(B)$ is also $G/N$-graded and
since $e_\alpha\in kN$, $f$ respects these gradings. Setting $a:=f^{-1}(e_\alpha g)$, where $R=\langle g\rangle$, proves the theorem.

\end{proof}

\begin{theorem}\label{thm:redp}
Assume we are in the setting of Theorem~\ref{thm:indexp}. Then

(i) $\Rad(B)=\Rad(b)Z(B)+(1-a)B$;

(ii) $\LL(b) < \LL(B)\leq \LL(b)+|D_r|-|D_r|/p$.
\end{theorem}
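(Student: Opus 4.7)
The plan is to exploit the graded decomposition $B=\bigoplus_{j=0}^{p-1}a^j b$ from Theorem~\ref{thm:indexp}, in which $a$ is central of $p$-power order dividing $|D_r|$, to reduce radical computations in $B$ to ones in $b$ combined with the nilpotent central element $1-a$.

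For part (i) I first check that both $\Rad(b)Z(B)$ and $(1-a)B$ are nilpotent two-sided ideals of $B$, and therefore contained in $\Rad(B)$. In characteristic $p$ we have $(1-a)^{|D_r|}=1-a^{|D_r|}=0$, so $(1-a)B$ is nilpotent, and the identity $\Rad(b)Z(B)=\Rad(b)B=\bigoplus_j a^j\Rad(b)$ together with $(\Rad(b)B)^k=\Rad(b)^k B$ settles the first. For the reverse inclusion I pass to $\bar B:=B/\Rad(b)Z(B)$. The key input is $\Rad(B)\cap b=\Rad(b)$, which holds because $b$ is a unital subalgebra of $B$ and $\Rad(B)\cap b$ is a nilpotent ideal of $b$; combined with $1-a\in\Rad(B)$ this yields $1-a^p=(1-a)^p\in\Rad(b)$. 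Writing $\bar b=b/\Rad(b)$, the decomposition of $B$ makes $\bar B$ free over $\bar b$ with basis $1,\bar a,\ldots,\bar a^{p-1}$; centrality of $\bar a$ together with $\bar a^p=1$ then gives an algebra isomorphism $\bar B\cong\bar b\otimes_k k[t]/((t-1)^p)$, whose Jacobson radical is $(1-\bar a)\bar B$ since $\bar b$ is semisimple. Lifting back gives $\Rad(B)=\Rad(b)Z(B)+(1-a)B$.

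For the strict lower bound in part (ii) I take any $0\neq x\in\Rad(b)^{\LL(b)-1}$; then $(1-a)x\in\Rad(B)^{\LL(b)}$ and $(1-a)x=x-ax$ is nonzero because its components $x$ and $-ax$ lie in the distinct direct summands $b$ and $ab$ of $B$. For the upper bound, part (i) together with the centrality of $1-a$ gives the commuting-ideals expansion $\Rad(B)^n=\sum_{k=0}^{n}\Rad(b)^k(1-a)^{n-k}B$. The congruence $(1-a)^p\in\Rad(b)$ from part (i) refines this to $\Rad(b)^k(1-a)^{n-k}B\subseteq\Rad(b)^{k+\lfloor(n-k)/p\rfloor}B$. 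Setting $n=\LL(b)+|D_r|-|D_r|/p$ and splitting on whether $k\leq\LL(b)-|D_r|/p$, one checks that either $n-k\geq|D_r|$ and $(1-a)^{n-k}=0$, or $k+\lfloor(n-k)/p\rfloor\geq\LL(b)$; in particular at the boundary value $k=\LL(b)-|D_r|/p+1$ one has $(n-k)/p=(|D_r|-1)/p$ with floor exactly $|D_r|/p-1$, making the sum equal to $\LL(b)$. Hence every summand vanishes and $\LL(B)\leq n$.

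The principal obstacle is the structural identification in part (i): the isomorphism $\bar B\cong\bar b\otimes_k k[t]/((t-1)^p)$ is where all of the content lies, and it hinges on the congruence $a^p\equiv1\pmod{\Rad(b)}$ via the elementary but indispensable equality $\Rad(B)\cap b=\Rad(b)$. Once (i) is in hand, the bounds in (ii) reduce to routine arithmetic in the commuting-ideals expansion of $\Rad(B)^n$.
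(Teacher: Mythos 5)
Your proposal is correct and follows essentially the same route as the paper: part (i) via the identification of $B/\Rad(b)B$ with $\bar b\otimes_k kC_p\cong\bar b\otimes_k k[t]/((t-1)^p)$ (which rests on the same observation that $a$ is central, homogeneous of degree one, and $a^p\equiv 1\pmod{\Rad(b)}$), and part (ii) via the same lower-bound trick of multiplying a top nonzero product in $\Rad(b)$ by $1-a$ together with the same counting for the upper bound using $(1-a)^p\in\Rad(b)$ and $(1-a)^{|D_r|}=0$. The only cosmetic difference is that you phrase the upper bound forwards via the commuting-ideals expansion $\Rad(B)^n=\sum_k\Rad(b)^k(1-a)^{n-k}B$, whereas the paper runs it backwards from a maximal nonzero product $\gamma_1\cdots\gamma_s$; the arithmetic is identical.
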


\begin{proof}
(i) Certainly $\Rad(b)Z(B)$ and $(1-a)B$ are nilpotent ideals of $B$ and so
\begin{align*}
\Rad(B)\supseteq\Rad(b)Z(B)+(1-a)B.
\end{align*}
Then
\begin{align*}
B/(\Rad(b)Z(B))=B/(\Rad(b)B)\cong(b/\Rad(b))\otimes_{b}B\cong(b/\Rad(b))\otimes_kkC_p,
\end{align*}
where the last isomorphism is given by $(x\otimes a)\mapsto(x\otimes g)$, where $C_p=\langle g\rangle$. Therefore
\begin{align*}
\Rad(B)/(\Rad(b)Z(B)+(1-a)B)
\end{align*}
is semisimple and so $\Rad(B)=\Rad(b)Z(B)+(1-a)B$.

(ii) If $\gamma_1,\dots,\gamma_s \in \Rad(b)$ with $\gamma_1\dots\gamma_s\neq0$, then $\gamma_1\dots\gamma_s (1-a)$ is a non-zero product of elements of $\Rad(B)$, and so $\LL(B)>\LL(b)$.

Next suppose $\beta_1,\ldots,\beta_s \in \Rad(B)$ with $\beta_1\dots\beta_s\neq0$ with $\beta_i=\Rad(B)$. Then by (i) there exist $\gamma_1,\dots,\gamma_s$ such that
$\gamma_1\dots\gamma_s\neq0$ and each $\gamma_i$ is either $(1-a)$ or is an element of $\Rad(b)$. Say $t$ of the $\gamma_i$'s are
$(1-a)$'s. Then clearly $t<|D_r|$ and since $(1-a)^p\in\Rad(b)$ we also have that $\lfloor t/p\rfloor+s-t<\LL(b)$.
As increasing $t$ can only decrease the left hand side of this inequality, it is also true that $\lfloor(|D_r|-1)/p\rfloor+s-(|D_r|-1)<\LL(b)$
and so $|D_r|/p-1+s-(|D_r|-1)<\LL(b)$ giving that $s<\LL(b)+|D_r|-|D_r|/p$ and the theorem is proved.
\end{proof}


\section{Further preliminary results}
\label{prelim}

The following result is well-known, but we are not aware of an explicit reference.

\begin{lemma}
\label{DconjugatesgenerateG}
Let $G$ be a finite group and $B$ be a $p$-block of $G$ with defect group $D$. Let $N \lhd G$ and let $b$ be a block of $N$ covered by $B$. If $D \leq N$, then $\LL(B)=\LL(b)$.
\end{lemma}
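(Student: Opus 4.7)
The plan is a two-stage reduction. First, if $b$ is not $G$-stable, let $T=\Stab_G(b)$ and let $B_T$ be the unique block of $kT$ covering $b$ with $(B_T)^G=B$. By Fong--Reynolds, $B$ and $B_T$ are Morita equivalent (via $kGe_{B_T}\otimes_{kT}-$), and since Morita equivalence preserves Loewy length we have $\LL(B)=\LL(B_T)$. As $N\leq T$, the hypothesis $D\leq N$ is retained, so after replacing $G$ by $T$ we may assume $b$ is $G$-stable, giving $e_B=e_b$ and realising $b$ as a unital subalgebra of $B$.

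In this $G$-stable situation, $G$-invariance of $\Rad(b)$ ensures that $\Rad(b)B=B\Rad(b)$ is a two-sided nilpotent ideal of $B$, hence contained in $\Rad(B)$. This immediately gives $\LL(B)\geq\LL(b)$, because $\Rad(b)^{\LL(b)-1}\neq 0$ lies in $\Rad(B)^{\LL(b)-1}$.

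For the reverse inequality it suffices to prove $\Rad(B)=\Rad(b)B$: this yields $\Rad(B)^n=\Rad(b)^n B$, so $\Rad(B)^{\LL(b)}=0$ and $\LL(B)\leq\LL(b)$. Writing $B=\bigoplus_{gN\in G/N}g\cdot b$ as a free $b$-module via coset representatives, the quotient $B/\Rad(b)B$ is a $G/N$-crossed product over the semisimple algebra $b/\Rad(b)$, so the task reduces to showing this crossed product is semisimple. Here the hypothesis $D\leq N$ enters crucially: combined with $D$ being a defect group of $B$ as well as of $b$, it controls the $G/N$-extension. The shortest route is a Clifford-theoretic comparison of defects. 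For $\chi\in\Irr(B)$ covering $\psi\in\Irr(b)$ one has $\chi(1)=e[G:T_\psi]\psi(1)$, and the identity $d_\chi=d_\psi+\nu_p([G:N])-\nu_p(e)-\nu_p([G:T_\psi])$ combined with $d(B)=d(b)=d(D)$ pins down how the $p$-part of $[G:N]$ must be absorbed into the Clifford extension data; carrying this through (equivalently, invoking Fong's second reduction to replace $G/N$ by a $p'$-group up to Morita equivalence) reduces the crossed product to one over a $p'$-group, whence Maschke's theorem gives semisimplicity and completes the proof.

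The main obstacle is the semisimplicity of $B/\Rad(b)B$: asserting it amounts to the block-theoretic fact that, when $D\leq N$ is a common defect group, the covering of $b$ by $B$ is essentially governed by a $p'$-quotient, and this requires a careful check using Clifford theory rather than any direct Maschke-type averaging in $G/N$ itself (which could a priori have $p$-part).
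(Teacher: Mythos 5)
Your opening reduction is sound and matches the paper: Fong--Reynolds lets you assume $b$ is $G$-stable, so $e_B=e_b$, $\Rad(b)B\subseteq\Rad(B)$ is a nilpotent ideal, and the whole problem comes down to showing $\Rad(B)=\Rad(b)B$, i.e.\ that $B/\Rad(b)B$ is semisimple. Up to that point you and the paper are in lockstep.

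The gap is in the last paragraph. You assert that the $G/N$-crossed product $B/\Rad(b)B$ over $b/\Rad(b)$ can be ``reduced to one over a $p'$-group'' via Clifford-theoretic bookkeeping and ``Fong's second reduction,'' and then finish by Maschke. But Fong's reduction concerns the special normal subgroup $O_{p'}(G)$ and does not apply to an arbitrary $N\lhd G$ with $D\leq N$; and the defect identity $d_\chi=d_\psi+\nu_p([G:N])-\nu_p(e)-\nu_p([G:T_\psi])$, even combined with $d(B)=d(b)$, does not by itself show that the $p$-part of $[G:N]$ can be ``absorbed'' in the way you need --- it constrains each $\chi$ individually but gives no structural control of the crossed product. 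The condition $D\leq N$ does not force $p\nmid[G:N]$, so there is no honest Maschke argument available in $G/N$, and you have not supplied a substitute. In short, the semisimplicity of $B/\Rad(b)B$, which you correctly flag as the main obstacle, is left unproved.

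The paper's argument for this step is different and does close the gap: since $D\leq N$, every $B$-module is relatively $N$-projective; in particular $B/\Rad(b)B$ and all its quotients are. The restriction of $B/\Rad(b)B$ to $kN$ is a module over $b/\Rad(b)$ and hence semisimple, so every short exact sequence of $B$-modules ending in a quotient of $B/\Rad(b)B$ splits on restriction to $N$ and therefore, by relative $N$-projectivity, splits over $G$. This gives semisimplicity of $B/\Rad(b)B$ directly, with no need to control the $p$-part of $[G:N]$ or to invoke any crossed-product structure. If you want to salvage your route you would need to prove the crossed-product semisimplicity by some other means (e.g.\ via higman's criterion or source-algebra considerations), but the relative-projectivity argument is both shorter and avoids the question entirely.
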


\begin{proof}
The proof given in~\cite[Lemma 4.1]{km01} for the case $G/N$ is a $p'$-group carries through in the general situation, but we include a simple argument suggested by Markus Linckelmann.

Let $I_G(b)$ be the stabilizer of $b$ in $G$. Since there is a block of $I_G(b)$ covering $b$ and Morita equivalent to $B$, we may assume that $G=I_G(b)$. Then $\Rad(b)B \subseteq \Rad(B)$. Since $D \leq N$, every $B$-module is relatively $N$-projective, and so in particular $B/\Rad(b)B$ and every quotient module is. Now the restriction of $B/\Rad(b)B$ to $kN$ is a semisimple $b$-module, so it follows that $B/\Rad(b)B$ is semisimple as a $B$-module. Hence $\Rad(B) \subseteq \Rad(b)B$, and so $\Rad(B)=\Rad(b)B$. Again since $b$ is $G$-stable, we have $\LL(B)=\LL(b)$.
\end{proof}

\begin{lemma}
\label{product}
Let $G_1, \ldots, G_t$ be finite groups and $G=G_1 \times \cdots \times G_t$. For each $i$ let $B_i$ be a block of $G_i$ and let $B=B_1 \otimes \cdots \otimes B_s$ be the block of $G$ covering each $B_i$. Let $D_i$ be a defect group of $B_i$, so $D=D_1 \times \cdots \times D_t$ is a defect group of $B$. Then $\LL(B)=(\sum_{i=1}^t \LL(B_i))-t+1$.

In particular, suppose $|D|=p^d$, $|D_i|=p^{d_i}$ and $D_i \cong C_{p^{a_{i,1}}} \times \cdots \times C_{p^{a_{i,r_i}}} \times (C_p)^{s_i}$, where each $a_{i,j}>1$. If $d_i<\LL(B_i) \leq p^{a_{i,1}}+\cdots+p^{a_{i,r_i}}+ps_i-r_i+1$ for each $i$, then
$d<\LL(B) \leq \sum_{i=1}^t \sum_{j=1}^{r_i}p^{a_{i,j}}+p\sum_{i=1}^t s_i- (\sum_{i=1}^tr_i)+1$. If, in addition, $\LL(B_i) \leq p^{a_{i,1}}+\cdots+p^{a_{i,r_i}}+ps_i-r_i$ for at least one $i$, then $\LL(B) \leq \sum_{i=1}^t \sum_{j=1}^{r_i}p^{a_{i,j}}+p\sum_{i=1}^t s_i- (\sum_{i=1}^tr_i)$.
\end{lemma}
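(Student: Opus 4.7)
The plan is to first establish the formula $\LL(B) = \sum_{i=1}^t \LL(B_i) - t + 1$, and then deduce both bounds by elementary bookkeeping.

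Since $kG \cong kG_1 \otimes_k \cdots \otimes_k kG_t$ and block idempotents in a tensor product of group algebras are precisely tensor products of block idempotents in each factor, $B \cong B_1 \otimes_k \cdots \otimes_k B_t$ as $k$-algebras. The core fact to prove is therefore that for finite-dimensional $k$-algebras $A$ and $C$,
\begin{equation*}
\LL(A \otimes_k C) = \LL(A) + \LL(C) - 1,
\end{equation*}
after which the general formula follows by induction on $t$.

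To see the displayed identity, note first that $\Rad(A) \otimes_k C + A \otimes_k \Rad(C)$ is a nilpotent two-sided ideal of $A \otimes_k C$ with quotient isomorphic to $(A/\Rad(A)) \otimes_k (C/\Rad(C))$. Because $k$ is algebraically closed, each factor is a finite product of matrix algebras over $k$, and their tensor product is again of this form, hence semisimple. Thus $\Rad(A \otimes_k C) = \Rad(A) \otimes_k C + A \otimes_k \Rad(C)$. Setting $I = \Rad(A) \otimes_k C$ and $J = A \otimes_k \Rad(C)$, we have $I^a J^b = \Rad(A)^a \otimes_k \Rad(C)^b = J^b I^a$, so
\begin{equation*}
\Rad(A \otimes_k C)^n \; = \sum_{a+b=n} \Rad(A)^a \otimes_k \Rad(C)^b,
\end{equation*}
and this sum vanishes precisely when $n \geq \LL(A) + \LL(C) - 1$ (the worst case being $a = \LL(A) - 1$).

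Given the formula $\LL(B) = \sum_i \LL(B_i) - t + 1$, the bounds are immediate arithmetic. From $\LL(B_i) \geq d_i + 1$ for each $i$ we obtain $\LL(B) \geq \sum_i(d_i + 1) - t + 1 = d + 1$, so $\LL(B) > d$. Summing the upper bounds $\LL(B_i) \leq p^{a_{i,1}} + \cdots + p^{a_{i,r_i}} + ps_i - r_i + 1$ and subtracting $t - 1$ yields the claimed upper bound on $\LL(B)$. If for some $i$ the stronger inequality $\LL(B_i) \leq p^{a_{i,1}} + \cdots + p^{a_{i,r_i}} + ps_i - r_i$ holds, then the total decreases by exactly $1$, giving the sharper bound. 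No serious obstacle arises; the only non-routine point is the tensor-product identity for Loewy length, and even that is a standard consequence of the algebraic closedness of $k$.
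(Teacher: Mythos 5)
Your proof is correct. The paper's own proof of this lemma consists solely of citing~\cite[1.1]{ajl83} for the identity $\LL(B)=\sum_{i}\LL(B_i)-t+1$ and observing that the bounds are a direct application; you instead supply the standard direct proof of that identity, identifying $\Rad(A\otimes_k C)=\Rad(A)\otimes_k C+A\otimes_k\Rad(C)$ (using that $k$ is algebraically closed so the quotient is semisimple), expanding $\Rad(A\otimes_k C)^n=\sum_{a+b=n}\Rad(A)^a\otimes_k\Rad(C)^b$ via the commuting ideals, and locating the vanishing threshold $n=\LL(A)+\LL(C)-1$. So the route is essentially the same, with the cited result proved rather than invoked, and the arithmetic for both the lower and upper bounds is carried out exactly as the paper intends.
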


\begin{proof}
The first part is~\cite[1.1]{ajl83} and the second part is just a direct application of the first.
\end{proof}

Before proceeding we recall the definition and some properties of the generalized Fitting subgroup $F^*(G)$ of a finite group $G$. Details may be found in~\cite{asc00}.

A \emph{component} of $G$ is a subnormal quasisimple subgroup of $G$. The components of $G$ commute, and we define the \emph{layer} $E(G)$ of $G$ to be the normal subgroup of $G$ generated by the components. It is a central product of the components. The \emph{Fitting subgroup} $F(G)$ is the largest nilpotent normal subgroup of $G$, and this is the direct product of $O_r(G)$ for all primes $r$ dividing $|G|$. The \emph{generalized Fitting subgroup} $F^*(G)$ is $E(G)F(G)$. A crucial property of $F^*(G)$ is that $C_G(F^*(G)) \leq F^*(G)$, so in particular $G/F^*(G)$ may be viewed as a subgroup of $\Aut(F^*(G))$.

\begin{lemma}
\label{reduce}
Let $B$ be a block of a finite group $G$ with abelian defect group $D$. Then there is a group $H$ and a block $C$ of $H$ with defect group $D_C \cong D$ such that $\LL(C)=\LL(B)$ and the following are satisfied:

(i) $C$ is quasiprimitive, that is, for every normal subgroup $N$ of $D$, every block of $N$ covered by $C$ is $H$-stable;

(ii) $H$ is generated by the defect groups of $C$;

(iii) If $N \lhd H$ and $C$ covers a nilpotent block of $N$, then $N \leq Z(H)O_p(H)$. In particular, $O_{p'}(H) \leq Z(H)$;

(iv) $O_p(H) \leq Z(H)$;

(v) Every component of $H$ is normal in $H$;

(vi) If $B$ is the principal block of $G$, then $C$ is the principal block of $H$.

\end{lemma}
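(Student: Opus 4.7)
The plan is to induct on $|G|$, iteratively applying four reductions, each of which replaces $(G,B)$ by $(H,C)$ with $|H|\leq |G|$, $\LL(C)=\LL(B)$ and defect groups of $C$ isomorphic to $D$. For (i), I would apply Fong--Reynolds: whenever some normal subgroup $N\lhd H$ carries a block $b$ covered by $C$ that is not $H$-stable, replace $H$ by $I_H(b)$ and $C$ by its Fong--Reynolds correspondent, which is Morita equivalent to $C$ (hence has the same Loewy length) and has the same defect group, in a strictly smaller group. For (ii), once (i) holds, set $H_0=\langle D^h:h\in H\rangle\lhd H$ and let $C_0$ be the unique block of $H_0$ covered by $C$; since $D\leq H_0$, Lemma~\ref{DconjugatesgenerateG} gives $\LL(C_0)=\LL(C)$, and $H_0$ is by construction generated by the defect groups of $C_0$. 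Applying (ii) may destroy (i), so I would alternate the two until both are satisfied; termination is guaranteed because $|H|$ strictly decreases at every nontrivial step.

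Once (i) and (ii) are in force, (iv) is immediate: $O_p(H)$ lies in every defect group of $C$, each conjugate $D^h$ is abelian and hence centralises $O_p(H)$, and these conjugates generate $H$ by (ii). For (iii), if $N\lhd H$ and $C$ covers a nilpotent block of $N$, Puig's theorem on nilpotent blocks together with K\"ulshammer's theorem on blocks covering them yields a Morita equivalence from $C$ to a block $\tilde C$ of a group $\tilde H$ with isomorphic defect group and the same Loewy length, in which the image of $N$ lies in $Z(\tilde H)O_p(\tilde H)$. Applying this with $N=O_{p'}(H)$ (whose unique block is always nilpotent) gives $O_{p'}(H)\leq Z(H)$; iterating over normal subgroups carrying nilpotent blocks yields the general form of (iii). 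Property (vi) is preserved throughout: the principal block is always $G$-stable, so Fong--Reynolds acts trivially on it; the unique block of $H_0$ covered by the principal block is principal; and K\"ulshammer's construction can be arranged to preserve principality.

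Property (v) is the subtlest. If $L$ is a component of $H$ not normal in $H$, let $\{L_1,\ldots,L_k\}$ be its $H$-orbit and $M=L_1\cdots L_k\lhd H$ the central product, in which each $L_i$ is normal. The block $b$ of $M$ covered by $C$ is $H$-stable by (i) and decomposes compatibly with the central-product structure as a tuple $(b_1,\ldots,b_k)$ of blocks of the $L_i$ on which $H$ acts transitively via its action on components. Setting $K=N_H(L_1)$, of index $k$ in $H$, Clifford theory for the pair $L_1\lhd K$ combined with a Fong--Reynolds-type correspondence for the covering of $b$ by $C$ should produce a block of $K$ of the same Loewy length with a defect group of the same isomorphism type, thereby strictly decreasing the number of non-normal components. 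Iterating and interleaving with the earlier reductions yields (v). The main obstacle is verifying termination of these interlocking reductions and, particularly in (v), controlling the defect-group data under the passage from $H$ to $K$; this is where the hypothesis that $D$ is abelian enters essentially, because $D$ need not lie in $K$ and the reduction cannot be handled by a bare application of Lemma~\ref{DconjugatesgenerateG}.
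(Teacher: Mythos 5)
There is a genuine gap, and it is exactly where you suspected: part (v). Your reductions for (i), (ii), (iv) match the paper's processes (a) and (b), and your use of the K\"ulshammer--Puig reduction for (iii) matches process (c). But the passage to $K=N_H(L_1)$ that you propose for (v) does not come with any Morita equivalence or Loewy-length control: $[H:K]=k$ is in general composite, $D$ need not be contained in $K$, and there is no ``Fong--Reynolds-type correspondence'' between $C$ and a block of $K$ when the covered block $b$ of $M$ is already $H$-stable (quasiprimitivity makes the Fong--Reynolds step at $M$ trivial). You correctly flagged this as the weak point. The resolution in the paper is that no further reduction is needed at all: once (i)--(iv) hold, (v) is a \emph{consequence}. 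One observes that $F^*(G)=E(G)Z(G)$ by (iii) and (iv), looks at the permutation homomorphism $\varphi\colon G\to S_m$ on the set of components, and shows $D\leq\ker\varphi$. The argument uses that $D\cap F^*(G)$ is a defect group of a covered block of $F^*(G)$, hence (modulo the centre) a radical $p$-subgroup of $F^*(G)/Z(G)\cong L_1Z/Z\times\cdots\times L_mZ/Z$; by a lemma of Olsson--Uno such a radical subgroup splits as a direct product of radical subgroups $D_i$ of the factors, quasiprimitivity forces each $D_i\neq 1$, and since $D$ is abelian an element of $D$ permuting components nontrivially would contradict this splitting. Finally $G=\langle D^g\rangle\leq\ker\varphi$, giving (v).

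Two smaller issues. First, your termination argument (``$|H|$ strictly decreases'') works for the (i)/(ii) alternation but fails once (iii) is interleaved, because the K\"ulshammer--Puig reduction passes to a central $p'$-extension $\tilde L$ whose order can \emph{increase}. The paper handles this by inducting on the pair $([G:O_{p'}(Z(G))],\,|G|)$ in lexicographic order, verifying that each process strictly reduces this pair when applied nontrivially; you would need something similar. Second, for (vi) your claim that ``K\"ulshammer's construction can be arranged to preserve principality'' is not obvious as stated; the paper instead bypasses process (c) in the principal-block case by invoking Frobenius's normal $p$-complement theorem: if $B$ is principal and covers a nilpotent (hence principal) block of $N\lhd G$, then $N$ has a normal $p$-complement $O_{p'}(N)$ lying in the kernel of $B$, so one may simply pass to $G/O_{p'}(N)$.
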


\begin{proof}
Consider pairs $([G:O_{p'}(Z(G))],|G|)$ with the lexigraphic ordering. There are three processes, labelled (a), (b), (c), which will be applied repeatedly and in various combinations. We describe these processes and show that they strictly reduce $([G:O_{p'}(Z(G))],|G|)$ when applied non-trivially, so that repeated application of (a), (b) and (c) terminates.

(a) Let $N \lhd G$ and let $b$ be a block of $N$ covered by $B$. Write $I=I_G(b)$ for the stabilizer of $b$ in $G$, and $B_I$ for the Fong-Reynolds correspondent. Now $B_I$ is Morita equivalent to $B$ and they have isomorphic defect groups. Clearly $O_{p'}(Z(G)) \leq O_{p'}(Z(I))$, and if $I \neq G$, then $[I:O_{p'}(Z(I))]<[G:O_{p'}(Z(G))]$. Process (a) involves replacing $B$ by $B_I$.

(b) is the replacement of $G$ by $N=\langle D^g:g \in G \rangle$ and $B$ by any block $b$ of $N$ covered by $B$, as in Lemma \ref{DconjugatesgenerateG}. In this case $[N:O_{p'}(Z(N))] \leq [G:O_{p'}(Z(G))]$ and $|N| < |G|$ if $N \neq G$. Since defect groups of $b$ are intersections of defect groups of $B$ with $N$ (see~\cite[15.1]{alp86}), it follows that $B$ and $b$ share a defect group.

(c) Let $N \lhd G$ and suppose that $B$ covers a nilpotent block $N$ of $G$ such that $N \not\leq Z(G)O_p(G)$. Let $b'$ be a block of $Z(G)N$ covered by $B$ and covering $b$. By performing (a) first we may assume that $b'$ is $G$-stable. Further $b'$ must also be nilpotent. Using the results of~\cite{kp90}, as outlined in~\cite[Proposition 2.2]{ekks14}, $B$ is Morita equivalent to a block $\tilde{B}$ of a central extension $\tilde{L}$ of a finite group $L$ by a $p'$-group such that there is an $M \lhd L$ with $M \cong D \cap (Z(G)N)$, $G/Z(G)N \cong L/M$, and $\tilde{B}$ has defect group isomorphic to $D$. Note that $[\tilde{L}:O_{p'}(Z(\tilde{L}))] \leq |L| = [G:Z(G)N]|D \cap (Z(G)N)| < [G:O_{p'}(Z(G))]$ and that $M \leq O_p(\tilde{L})Z(\tilde{L})$. Process (c) consists of replacing $G$ by $\tilde{L}$ and $B$ by $\tilde{B}$.

Repeated application of (a), (b) and (c) must eventually terminate, in which case we are left with $H$ and a block $C$ of $H$ with defect group $D_C \cong D$ satisfying conditions (i)-(iii).

To see that $H$ and $C$ satisfy (iv), note that $D \leq C_G(O_p(G)) \lhd G$, so (iv) is a consequence of (ii).

(v) Write $L_1,\ldots,L_m$ for the components of $G$. We may assume that $B$ is quasiprimitive, $G$ is generated by the defect groups of $B$ and that $B$ does not cover any nilpotent block with non-central defect groups. As above the generalised Fitting subgroup $F^*(G)=E(G)F(G)$, where $E(G)=L_1\cdots L_m \lhd G$. Since by (iv) we may assume $O_p(G)\leq Z(G)$ and by (iii) we may assume $O_r(G)\leq Z(G)$ for primes $r\neq p$, we assume that $F^*(G)=E(G)Z(G)$. By a similar argument we also have that $Z(F^*(G))=Z(G)$. Let $\varphi : G \rightarrow S_m$ be the homomorphism given by the permutation action on the components. Now $D \cap F^*(G)$ is a defect group for a (non-nilpotent) block of $F^*(G)$ covered by $B$. Hence $(D \cap F^*(G))/O_p(Z(G))$ is a defect group for a block of $F^*(G)/O_p(Z(G))$. Therefore $(D \cap F^*(G))/O_p(Z(G))$ is a radical $p$-subgroup of $F^*(G)/O_p(Z(G))$ (recall that a $p$-subgroup $Q$ of a finite group of a finite group $H$ is radical if $Q=O_p(N_G(Q))$ and that defect groups are radical $p$-subgroups) and so $(D \cap F^*(G))Z(G)/Z(G)$ is a radical $p$-subgroup of $F^*(G)/Z(G)\cong (L_1Z(G)/Z(G)) \times \cdots \times (L_mZ(G)/Z(G))$. Note that $(D \cap F^*(G))Z(G)/Z(G)$ is not necessarily a defect group, hence our move to the weaker condition of being a radical $p$-subgroup. By~\cite[Lemma 2.2]{ou95} we have therefore $(D \cap F^*(G))Z(G)/Z(G) = D_1 \times \cdots \times D_m$, where $D_i = (D \cap F^*(G))Z(G)/Z(G)) \cap (L_iZ(G)/Z(G))$. Since $B$ is quasiprimitive it follows that each $D_i$ is non-trivial (otherwise $B$ covers a nilpotent block of a non-central normal subgroup, namely the subgroup generated by $Z(G)$ and the orbit of $L_i$). Hence since $D$ is abelian we have $D \leq ker(\varphi)$, and so (v) follows since $G$ is generated by the conjugates of $D$.

To prove (vi), it suffices to show that the processes (a), (b) and (c) respect principal blocks. If $N \lhd G$ and $B$ is principal, then $B$ covers the principal block of $N$. Also, the Fong-Reynolds correspondence takes principal blocks to principal blocks. It follows that if $B$ is the principal block of $G$, then the block constructed in (a) and (b) may be taken to be the principal block. In (c), if $B$ is the principal block and $B$ covers a nilpotent block $b$ of $N \lhd G$, then $b$ is the principal block of $N$ and by a theorem of Frobenius (see~\cite[7.4.5]{gor80}) we have that $N$ has a normal $p$-complement. We may then apply the reduction to $O_{p'}(N) \lhd G$. But $B$ covers the principal block of $O_{p'}(N)$, i.e., $O_{p'}(N)$ lies in the kernel of $B$, and we may replace $B$ by the principal block of $G/O_{p'}(N)$.

\end{proof}

\begin{proposition}~\cite[Theorem 6.1]{ekks14}
\label{classification}

Let $k$ be an algebraically closed field of characteristic $2$, $G$ a quasi-simple group and $B$ be a block of $kG$ with abelian defect group $P$, then one (or more) of the following holds:

(i) $G/Z(G)$ is one of $A_1(2^a)$, $\,^2G_2(q)$ (where $a>1$ and $q \geq 27$ is a power of $3$ with odd exponent), or $J_1$, $B$ is the principal block and $P$ is elementary abelian.

(ii) $G$ is $Co_3 $, $B$ is a non-principal block, $P \cong C_2 \times C_2 \times C_2 $.

(iii) There exists a finite group $\tilde G$ such that $G \unlhd \tilde G $, $Z(G) \leq Z(\tilde G) $ and such that $B$ is covered by a nilpotent block of $\tilde G$.

(iv) $B$ is Morita equivalent to a block $C$ of $kL$ where $L=L_0 \times L_1$  is  a subgroup of $G$ such that the following holds: The defect groups of $C$ are isomorphic to $P$, $L_0$ is abelian and the block of $kL_1 $ covered by $C$ has Klein four defect groups.

\end{proposition}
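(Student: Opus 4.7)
The plan is to invoke the classification of finite simple groups and work through each family of quasi-simple groups $G$, identifying precisely when a $2$-block $B$ of $G$ can have abelian defect group $P$. The dichotomy in the conclusion mirrors the reduction steps in Lemma~\ref{reduce}: if $B$ is covered by a nilpotent block of an extension of $G$ preserving the central $p'$-part we land in case (iii); if the defect group contains a Klein four factor that cleanly splits off through a Morita equivalence we land in case (iv); otherwise $G$ must be one of the genuinely exceptional families listed in (i) and (ii).

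First I would dispose of Lie type groups in defining characteristic~$2$. For the principal block the defect group is a Sylow $2$-subgroup, and it is known that these are abelian only for $A_1(2^a) = \mathrm{PSL}_2(2^a)$ with $a > 1$, where the Sylow is elementary abelian of order $2^a$; this feeds case (i). Non-principal $2$-blocks in defining characteristic have defect zero by standard results on blocks of reductive groups, hence are trivially nilpotent and accounted for by case (iii). Lie type in non-defining (odd) characteristic is next, treated via the $e$-cuspidal theory of Brou\'e-Michel and Cabanes-Enguehard: the abelian defect hypothesis forces a very small Levi subgroup and severely restricts the $e$-cuspidal pair, so one is reduced either to a block with Klein four defect (case (iv), via the Brou\'e-Puig theory of extensions of Levi blocks) or to the exceptional family $^2G_2(q)$ in (i). Alternating groups are handled by the James-Kerber/Nakayama description via $2$-cores and $2$-weight: abelian defect forces $2$-weight at most $1$, so either the defect is Klein four (case (iv)) or one lands in $A_5 \cong A_1(4)$ already covered by (i); the spin covers of $A_n$ contribute only blocks already covered by nilpotent blocks of a slightly larger group. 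The sporadic groups are then dispatched by direct inspection of the known block data, producing $J_1$ in case (i), the unique non-principal block of $Co_3$ with $(C_2)^3$ defect in case (ii), and all remaining sporadic cases under (iii) or (iv).

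The main obstacle is to show rigorously that whenever $G$ is quasi-simple with a $2$-block of abelian defect falling outside cases (i) and (ii), the block is either covered by a nilpotent block of an extension $\tilde G$ with $Z(G) \leq Z(\tilde G)$, or it decomposes Morita-equivalently through a subgroup $L = L_0 \times L_1$ with $L_1$ of Klein-four defect. Handling this cleanly requires the K\"ulshammer-Puig structure theorem on blocks covered by nilpotent blocks together with a careful tracking of central $p'$-extensions to guarantee the containment of centres demanded in (iii), and for case (iv) an appeal to the classification (due in this form to Erdmann and Linckelmann) of algebras Morita equivalent to blocks with Klein-four defect, so that the product factorisation $L_0 \times L_1$ can actually be realised as a subgroup. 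This delicate bookkeeping of centres, nilpotent covers and Morita reductions against the CFSG-based case analysis is the principal technical hurdle.
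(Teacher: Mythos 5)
The paper does not supply a proof of this proposition at all: the citation \cite[Theorem 6.1]{ekks14} in the theorem header signals that the statement is being quoted from Eaton--Kessar--K\"ulshammer--Sambale, and the present paper simply relies on that external result. There is therefore no in-paper proof against which your attempt can be compared.

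As a reconstruction of the proof in the cited source, your sketch correctly identifies the broad architecture: invoke the classification of finite simple groups and run a case analysis over quasi-simple $G$, treating Lie type in defining characteristic $2$ (where abelian Sylow $2$-subgroups force $A_1(2^a)$, and non-principal blocks have defect zero), Lie type in non-defining characteristic via Cabanes--Enguehard/Brou\'e--Michel style reductions on $e$-cuspidal data, alternating groups and their covers via $2$-cores and weights, and sporadic groups by direct inspection, with the K\"ulshammer--Puig theory of extensions of nilpotent blocks underpinning case (iii) and the classification of blocks with Klein four defect groups underpinning case (iv). This is, in outline, what~\cite{ekks14} does. However, the sketch is a plan rather than a proof. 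Essentially all of the load-bearing technical content is compressed into one-line gestures: establishing the centre-containment $Z(G)\leq Z(\tilde G)$ in (iii) requires careful bookkeeping of covering groups and diagonal/field/graph automorphisms case by case; realising the factorisation $L=L_0\times L_1$ as an actual subgroup with a Morita equivalence preserving defect groups in (iv) is a delicate point, not an automatic consequence of the Klein-four classification; the exceptional Lie type groups and the spin covers $2.A_n$, $3.A_n$ each need separate arguments; and the unitary/linear groups in non-defining characteristic feed cases (iii) and (iv) through several distinct mechanisms that your sketch conflates. A couple of specific points are also imprecise: for alternating groups the relevant invariant is the $\bar{2}$-weight and the double covers behave differently, and it is not true that the non-defining characteristic Lie type analysis reduces only to Klein four blocks or $\,^2G_2(q)$; a substantial portion lands in (iii) via nilpotent covers. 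In short, the strategy is the right one, but filling in the details is the content of a substantial portion of the cited paper, not a routine verification.
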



\section{Proof of the main result}
\label{proof}

\begin{theorem}
Let $B$ be a $2$-block with abelian defect group $D$. Suppose $D \cong C_{2^{a_1}} \times \cdots \times C_{2^{a_r}} \times (C_2)^s$, where $a_i > 1$ for all $i$ and $r \geq 0$. Write $|D|=2^d$.

Then $d < \LL(B) \leq 2^{a_1}+\cdots+2^{a_r}+2s-r+1$. If $s=1$, then $\LL(B) \leq 2^{a_1}+\cdots+2^{a_r}+2-r$.
\end{theorem}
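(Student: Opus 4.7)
I plan to prove the theorem by induction on $|D|$, with the base case $d = 0$ being trivial. At the start of the inductive step, apply Lemma~\ref{reduce} to replace $(G, B)$ by a pair $(H, C)$ of the same Loewy length, with an isomorphic defect group, and satisfying the structural conditions (i)--(vi); from now on I denote this reduced pair by $(G, B)$.

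The lower bound $\LL(B) > d$ is handled alongside the upper bound. If a normal subgroup $N \vartriangleleft G$ of index $2$ exists with $G = N D_\ell$ and $D_j \leq N$ for every other cyclic factor $D_j$ of the primary decomposition of $D$, then Theorem~\ref{thm:redp}(ii) gives $\LL(B) > \LL(b)$; since $b$ has defect group of order $|D|/2$, the inductive hypothesis $\LL(b) > d - 1$ forces $\LL(B) > d$. When no such $N$ exists, $D$ must be elementary abelian and $G$ has the specific quasisimple structure described in Proposition~\ref{classification}, and the lower bound is then verified case by case from the known Loewy lengths of the relevant principal blocks.

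For the upper bound, the main inductive step is the case $r \geq 1$. The strategy is to produce a normal subgroup $N \vartriangleleft G$ with $[G:N] = 2$, $G = N D_\ell$, and $D_j \leq N$ for every other cyclic factor $D_j$, where $D_\ell$ is a chosen cyclic factor of $D$ of order at least $4$. The existence of such $N$ rests on the structural conditions from Lemma~\ref{reduce}, in particular on $G$ being generated by $G$-conjugates of $D$ and on the primary decomposition of $D$ being preserved by the action of $G$. Theorem~\ref{thm:redp}(ii) then yields $\LL(B) \leq \LL(b) + |D_\ell|/2$, and since $b$ has strictly smaller defect group the inductive hypothesis applies. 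A direct arithmetic verification, splitting into the subcases $a_\ell \geq 3$ (which preserves $r$ and decrements $a_\ell$) and $a_\ell = 2$ (which sends $(r, s) \mapsto (r-1, s+1)$), yields the asserted bound.

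The remaining case is $r = 0$ with $D$ elementary abelian of rank $s$, which I would treat as a separate base. For $s \leq 1$ the conclusion is immediate: $s = 0$ gives $\LL(B) = 1$, and $s = 1$ gives $\LL(B) = 2$ since any block with defect $C_2$ is Morita equivalent to $kC_2$. For $s \geq 2$, Lemma~\ref{reduce}(v) provides normal components $L_1, \ldots, L_m$ of $G$, and each contributes a block $B_i$ falling into one of the four cases of Proposition~\ref{classification}. Applying Lemma~\ref{product}, after passing to a central-product structure on $G$ modulo a central factor, bounds $\LL(B) \leq \sum_i \LL(B_i) - m + 1$, and a case-by-case check of Proposition~\ref{classification} confirms $\LL(B_i) \leq 2 s_i + 1$ for each component (where $s_i$ is the rank of the associated factor of $D$). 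The $s = 1$ improvement in the general bound should be propagated by first isolating a single $C_2$ factor via Lemma~\ref{product} whenever the defect group has exactly one $C_2$ factor, so that the residual block can be treated with the $s = 0$ bound and then recombined with the $\LL(kC_2) = 2$ contribution. The principal obstacle in the proof is this final case analysis under Proposition~\ref{classification}, where the classification of finite simple groups enters essentially through the Loewy length computations for the principal blocks of $A_1(2^a)$, ${}^2G_2(q)$, $J_1$, the non-principal block of $Co_3$, and the Klein-four blocks of case (iv); a secondary difficulty is constructing the index-$2$ normal subgroup needed in the inductive step above.
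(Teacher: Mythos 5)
Your overall toolbox matches the paper's (Lemma~\ref{reduce}, Theorems~\ref{thm:indexp} and~\ref{thm:redp}, Proposition~\ref{classification}, Lemma~\ref{product}), but the proposed induction on $|D|$ diverges from the paper's strategy in a way that introduces two genuine gaps.

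First, you insist on finding $N \lhd G$ of index $2$ with $G = N D_\ell$ and $D_j \leq N$ for $j \neq \ell$, where $D_\ell$ is a \emph{chosen} cyclic factor of $D$ of order at least $4$. This is not justified. When $G \neq O^2(G)$ one does get (via quasiprimitivity and Knörr's theorem) a defect group $D'$ and a normal $N$ of index $2$ with $D'N = G$, and one can then choose a cyclic decomposition of $D'$ in which exactly one factor sticks out of $N$ --- but that factor is whatever it is, and could well be a $C_2$. There is no freedom to pick a factor of order $\geq 4$ to strip, and after the reductions of Lemma~\ref{reduce} one expects the opposite: since quasisimple groups with abelian Sylow $2$-subgroups have elementary abelian ones, the cyclic factors of order $\geq 4$ tend to sit \emph{inside} $E(G) \leq O^2(G)$ only via the abelian direct-factor part, and there is no a priori reason a chosen $D_\ell$ of order $\geq 4$ lies outside $O^2(G)$.

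Second, and more seriously, the ``direct arithmetic verification'' in the case $a_\ell = 2$ does not in fact yield the bound. Write $U(D) = 2^{a_1}+\cdots+2^{a_r}+2s-r+\delta$, with $\delta = 1$ unless $s = 1$, in which case $\delta = 0$. Stripping a $C_4$ sends $(r,s)\mapsto(r-1,s+1)$, and Theorem~\ref{thm:redp}(ii) adds $|D_\ell|/2 = 2$. For $s = 0$ the step works, but for $s = 1$ one finds $U(D \cap N) + 2 = U(D) + 2$, and for $s \geq 2$ one finds $U(D \cap N) + 2 = U(D) + 1$. A concrete instance: $D \cong C_4 \times C_2$ has $U(D) = 5$, but $D\cap N \cong C_2 \times C_2$ gives $U(D\cap N) + 2 = 5 + 2 = 7$. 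So the naive induction does not close. The paper does not rely on the inductive inequality $\LL(b) \leq U(D\cap N)$ at each stage; it strips all the way to $G = O^2(G)$, then invokes the $E = U \times V \times W$ decomposition from~\cite[Theorem 8.3]{ekks14} and computes Loewy lengths \emph{exactly} (via $\LL(B_{U/Z}) = \LL(kD_{U/Z})$, the $\{3,5\}$ dichotomy for Klein four blocks, and the explicit values $2a+1$ and $7$ for the special components), so that the slack in the product formula absorbs the stripping cost. That direct computation is precisely the control your per-step induction gives up, and without it the argument in the case $a_\ell = 2$, $s \geq 1$ fails.
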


\begin{proof}
%
We may assume that $B$ satisfies conditions (i) to (v) of Lemma~\ref{reduce}.

Next suppose that $N$ is a normal subgroup of $G$ of index $2$ and $b$ is a block of $N$ covered by $B$. Since $B$ is quasiprimitive $G$ stabilizes $b$, so by~\cite[$\S15$, Theorem 1 (4)]{alp86} there exists a block $B'$ of $G$ covering $b$ with defect group $D'$ such that $D'N=G$. Now by Theorem~\ref{thm:indexp}, $B'$ and $b$ must share an idempotent and so $B'$ must equal $B$. We can now apply Theorem~\ref{thm:redp}(ii), so that it suffices to prove the Theorem for $b$. By repeated application of this, and possibly further application of Lemma~\ref{reduce}, we may assume that $G=O^2(G)$.

By the arguments in Lemma~\ref{reduce}(v) and the fact that generalised Fitting subgroups are self-centralising we may assume that $$S_1 \times \cdots \times S_m \leq G/Z(G) \leq \Aut(S_1) \times \cdots \Aut(S_m) \leq \Aut(S_1 \times \cdots \times S_m)$$ for normal non-abelian simple groups $S_1,\ldots ,S_m$. Continuing with the notation from Lemma~\ref{reduce}(v) we have $L_iZ(G)/Z(G) \cong S_i$ and $L_i \lhd G$ for each $i$. By the Schreier conjecture $G/E(G)$ is solvable, and hence, as we are assuming both $G=O^2(G)$ and $G=O^{2'}(G)$, we may assume that $G=E(G)$.

The remainder of the proof proceeds almost as in that of~\cite[Theorem 8.3]{ekks14}. There it is proved that we may assume (by replacing $B$ by a Morita equivalent block of another finite group if necessary) that $G=E/Z$, where $E=U \times V \times W$ with $Z \leq O_2(Z(E)) \cap U$, $B$ is a product of blocks $B_{U/Z}$, $B_V$ and $B_W$ of $U/Z$, $V$ and $W$ respectively and $D=D_{U/Z} \times D_V \times D_W$ with the following properties: $B_{U/Z}$ is a block of $U/Z$ with defect group $D_{U/Z}$ and is Morita equivalent to its Brauer correspondent in $N_{U/Z}(D_{U/Z})$; $V=V_1 \times \cdots \times V_t$ and $B_V$ is a block of $V$ with defect group $D_V$ and which is a product of blocks $B_{V_i}$ of $V_i$ with Klein four defect groups; $W=W_1 \times \cdots \times W_u$ and $B_W$ is a block of $W$ with defect group $D_W$ and which is a product of blocks $B_{W_i}$ of $W_i$ satisfying condition (i) or (ii) of Proposition \ref{classification}.

By Lemma~\ref{DconjugatesgenerateG} $\LL(B_{U/Z}) = \LL(kD_{U/Z})$, and so $B_{U/Z}$ satisfies the required inequality. Since a block with Klein four defect groups has Loewy length $3$ or $5$, by Lemma \ref{product} $B_V$ also satisfies the inequality. It remains to consider the blocks satisfying (i) or (ii) of Proposition \ref{classification}. By~\cite{alp79} the Loewy length of the principal block of $SL_2(2^a)$ is $2a+1$. By~\cite{ok97}, for all $m$ the principal $2$-block of $^2G_2(3^{2m+1})$ is Morita equivalent to that of $\Aut(SL_2(8))$. Since $[\Aut(SL_2(8)):SL_2(8)]=3$, by Lemma \ref{DconjugatesgenerateG} the Loewy length in this case is that of the principal block of $SL_2(8)$, i.e., $7$. By~\cite{go97} and~\cite{lm80} we have $\LL(B_0(kJ_1))=7$. By~\cite[1.5]{kmn11} the principal block of $Co_3$ is Morita equivalent to that of $\Aut(SL_2(8))$, so again the Loewy length is $7$.

Next we note that if $s=1$ then $V=\{1\}$ and the relevant block ($B_{U/Z}$ or $B_W$, depending on whether $D_{U/Z}$ or $D_W$ has a factor isomorphic to $C_2$) satisfies the stronger upper bound.

The result then follows by Lemma \ref{product}.
\end{proof}

\begin{corollary}
Let $B$ be a $2$-block with abelian defect group $D$. Then $\LL(B) \leq |D|$ unless $D$ is a Klein four group.
\end{corollary}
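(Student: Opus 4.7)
The plan is to apply the upper bound on $\LL(B)$ just proved in Theorem 4.1 and show directly that it is at most $|D|=2^{a_1+\cdots+a_r+s}$ in every case except $r=0,s=2$. Writing $D \cong C_{2^{a_1}} \times \cdots \times C_{2^{a_r}} \times (C_2)^s$ with each $a_i\geq 2$, the main theorem gives $\LL(B) \leq 2^{a_1}+\cdots+2^{a_r}+2s-r+1$, with the stronger bound $2^{a_1}+\cdots+2^{a_r}+2-r$ when $s=1$. The key elementary ingredient I would record first is the inequality $2^{a_1}+\cdots+2^{a_r} \leq 2^{a_1+\cdots+a_r}$ whenever $a_i \geq 1$, which follows by a one-line induction from $2^x+2^y \leq 2^{x+y}$ for $x,y\geq 1$.

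I would then split on $s$. For $s=0$ one has $\LL(B) \leq 2^{a_1}+\cdots+2^{a_r}-r+1 \leq 2^{a_1+\cdots+a_r}-r+1 \leq |D|$ using $r\geq 1$ (the $r=0$ case being trivial). For $s=1$, the improved bound and the same inequality give $\LL(B) \leq 2^{a_1+\cdots+a_r}+2-r$, which is bounded by $2\cdot 2^{a_1+\cdots+a_r}=|D|$ since $2-r \leq 2^{a_1+\cdots+a_r}$ in every case. For $s \geq 2$ I would use $\LL(B)\leq 2^{a_1+\cdots+a_r}+2s$ and ask that $(2^s-1)\cdot 2^{a_1+\cdots+a_r} \geq 2s$. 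When $r\geq 1$, $2^{a_1+\cdots+a_r}\geq 4$ so it suffices to note $4(2^s-1)\geq 2s$ for $s\geq 2$. When $r=0$ one must simply check $2s+1 \leq 2^s$, which holds for $s\geq 3$ but fails for $s=2$ (giving $5>4$); this is precisely the Klein-four exception.

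There is no real obstacle here, only bookkeeping: the inequalities are elementary and the single case $r=0,s=2$ — the Klein-four defect group — falls out naturally as the one instance where the sharp upper bound from Theorem 4.1 exceeds $|D|$, which is consistent with the known counterexample $B_0(kA_5)$. I would finish by remarking that all other defect-group isomorphism types satisfy $\LL(B)\leq |D|$.
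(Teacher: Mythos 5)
Your approach is the natural one: the paper states the corollary without proof as an immediate arithmetical consequence of the theorem, and you supply exactly that arithmetic, with the key inequality $2^{a_1}+\cdots+2^{a_r}\leq 2^{a_1+\cdots+a_r}$ doing the work. The $s=0$ and $s\geq 2$ cases are handled correctly, and you correctly isolate $r=0,\,s=2$ (Klein four) as the only place the theorem's upper bound exceeds $|D|$.

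There is one small slip in the $s=1$ case: you assert $2-r\leq 2^{a_1+\cdots+a_r}$ ``in every case,'' but for $r=0$ this reads $2\leq 2^0=1$, which is false. (Replacing the empty sum $2^{a_1}+\cdots+2^{a_r}=0$ by $2^{a_1+\cdots+a_r}=1$ already weakens the improved bound from $\LL(B)\leq 2$ to $\LL(B)\leq 3$, which no longer suffices for $|D|=2$.) The conclusion of course still holds: with $r=0$, $s=1$ the improved bound in the theorem reads $\LL(B)\leq 0+2-0=2=|D|$ directly. You should either handle $r=0$ separately in the $s=1$ case, or restrict your chain of inequalities to $r\geq 1$ there; as written the ``in every case'' claim is an overstatement. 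Apart from this bookkeeping issue the argument is complete and correct.
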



\section{Other primes}
\label{otherprimes}

There is relatively little evidence for similar bounds for odd primes, but it is tempting to conjecture the following:

\begin{conjecture}
\label{oddprimes}
Let $B$ be a block of a finite group $G$ with abelian defect group $D \cong C_{p^{a_1}} \times \cdots \times C_{p^{a_r}} \times (C_p)^s$, where $a_i > 1$ for all $i$ and $r \geq 0$. Write $|D|=p^d$. Then $$d<\LL(B) \leq p^{a_1}+\cdots+p^{a_r}+ps-r+ \delta,$$ where $\delta=1$ if $s$ is even and $\delta=0$ if $s$ is odd.
\end{conjecture}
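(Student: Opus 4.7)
The plan is to mirror the structure of the $p=2$ proof. The reduction machinery of Sections~\ref{indexp} and~\ref{prelim} is prime-independent: Theorems~\ref{thm:indexp} and~\ref{thm:redp} apply to any normal subgroup of index $p$, and parts (i)--(v) of Lemma~\ref{reduce} work verbatim for arbitrary primes. Starting from an arbitrary block $B$ with abelian defect group $D$, repeated use of Lemma~\ref{reduce} together with the index-$p$ reduction of Theorems~\ref{thm:indexp}--\ref{thm:redp} brings us to a block of a group $G=E(G)$ that is a central product of quasi-simple components $L_1,\ldots,L_m$, with $D$ embedded in $F^*(G)$ and splitting as a direct product across the quotients $L_iZ(G)/Z(G)$. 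The product formula Lemma~\ref{product} then reduces the desired upper bound to a Loewy length bound on a block of (a central extension of) each quasi-simple factor, with the sharpening encoded by $\delta$ coming from any single factor that already satisfies the stronger estimate.

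The problem thus becomes: for each quasi-simple $L$ admitting a block $C$ with abelian defect group $Q$, establish the conjectured upper bound on $\LL(C)$. The principal obstacle is the absence, for odd primes, of an analogue of Proposition~\ref{classification}, i.e.\ a CFSG-based classification of such pairs $(L,C)$. Using the classification one would split into the usual families: (a) alternating groups and their covers, where abelian defect forces $p$-weight at most $p-1$ and Loewy lengths can be bounded via Brauer tree calculations or the structure of RoCK blocks; (b) sporadic groups, a finite case-check; (c) finite groups of Lie type in defining characteristic $p$, where Alperin-type formulas generalising $\LL(B_0(kSL_2(2^a)))=2a+1$ must be established for the principal $p$-block of $SL_2(p^a)$ and of Levi/parabolic analogues, and sporadic-type exceptions analogous to $^2G_2(q)$ and $J_1$ cannot be ruled out a priori; (d) finite groups of Lie type in non-defining characteristic, where Brou\'e's abelian defect conjecture (proved in many relevant cases via Bonnaf\'e--Dat--Rouquier and Chuang--Rouquier) yields a derived equivalence to the Brauer correspondent, reducing the bound to the purely local computation $\LL(k[D\rtimes E])$ for the inertial quotient $E$.

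Family (c) is by far the hardest step, and is presumably why the paper records its evidence only for principal $p$-blocks: there Lemma~\ref{reduce}(vi) forces the blocks on each component to be principal, which substantially shortens the list and lets fusion-theoretic arguments eliminate many configurations. The lower bound $d<\LL(B)$ is handled uniformly: it is additive under Lemma~\ref{product} and strictly increased by the index-$p$ reduction (Theorem~\ref{thm:redp}(ii)), so it too reduces to the quasi-simple case, and there one aims to exhibit an element of $Z(B)$ whose nilpotency index exceeds $d$, e.g.\ via the image of a generator of a maximal cyclic factor of $D$ under Watanabe's isomorphism. Finally, the sharpness of the upper bound is to be witnessed, as in~\cite{alp79}, by the principal block of $C_{p^{a_1}}\times\cdots\times C_{p^{a_r}}\times SL_2(p^s)$, provided one verifies that $\LL(B_0(kSL_2(p^s)))$ exhibits the predicted parity dependence on $s$ encoded by $\delta$; I would check this explicitly for small $p$ and $s$ before attacking the general classification.
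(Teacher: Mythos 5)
The statement you are addressing is labelled a \emph{Conjecture} in the paper, and the paper offers no proof of it: it is presented as an open problem, with the surrounding text even raising as a separate Question whether the conjectured upper bound is sharp for $D \cong C_p \times C_p$ when $p>3$. There is therefore no paper argument against which to match your proposal; the paper supplies only partial evidence, namely the (conditional) Proposition on principal $3$-blocks.

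That said, your proposed strategy faithfully reproduces the skeleton of the paper's $p=2$ proof and of its $p=3$ evidence: the reduction machinery of Sections~\ref{indexp} and~\ref{prelim} is indeed prime-independent, the passage to $G=E(G)$ and the factorisation into quasi-simple components is the right structural move, and Lemma~\ref{product} handles the parity term $\delta$ correctly (if $s$ is odd then at least one component has an odd number of $C_p$-factors, so satisfies the $\delta=0$ bound, which makes the product estimate close). You also correctly isolate the decisive gap: there is no odd-prime analogue of Proposition~\ref{classification}, i.e.\ no CFSG-based classification of quasi-simple blocks with abelian defect groups together with the Morita/derived-equivalence information needed to compute Loewy lengths. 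What you have written is a reasonable research programme rather than a proof, and the obstruction is exactly the one you name.

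One concrete error to flag: for odd $p$ the sharpness witness is not $SL_2(p^s)$. Already for $p=3$ the principal block of $PSL_2(3^m)$ has Loewy length $2m+1$ (cited from~\cite{ajl83}), which falls strictly below the conjectured ceiling $3m+\delta$ once $m\geq 2$; more generally the defining-characteristic Loewy lengths grow too slowly to saturate $ps+\delta$. The paper's own discussion after Conjecture~\ref{oddprimes} indicates that for $p=3$ the bound is achieved instead by the principal block of $C_{3^{a_1}} \times \cdots \times C_{3^{a_r}} \times (M_{23})^{s/2}$ when $s$ is even, and by $C_{3^{a_1}} \times \cdots \times C_{3^{a_r}} \times (M_{23})^{(s-1)/2} \times C_3$ when $s$ is odd; the parity term $\delta$ reflects the fact that the elementary abelian Sylow $3$-subgroup of $M_{23}$ has rank $2$ and its principal block has Loewy length $7 = 3\cdot 2 + 1$. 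You should replace the $SL_2$ picture with this one before attempting the numerical checks you propose, and recognise that finding the analogous sharpness witness for $p>3$ is itself an open question in the paper.
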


Note that when $D$ is cyclic $\LL(B)$ may be computed from the Brauer tree, and is bounded above by $\left( \frac{|D|-1}{e(B)} \right) e(B) +1 = |D|$, where $e(B)$ is the number of simple $B$-modules (here $\frac{|D|-1}{e(B)}$ is the multiplicity of an exceptional vertex and $e(B)$ is an upper bound for the number of edges emanating from a vertex). By~\cite[2.8]{kks14} $\LL(B) \geq \frac{|D|-1}{e(B)} + 1 > d$ since $e(B) \leq p-1$.

We discuss the case $p=3$ below, but mention now that the conjectured bound is achieved in this case by taking the principal block of $C_{3^{a_1}} \times \cdots \times C_{3^{a_r}} \times (M_{23})^{s/2}$ if $s$ is even or $C_{3^{a_1}} \times \cdots \times C_{3^{a_r}} \times (M_{23})^{(s-1)/2} \times C_3$ if $s$ is odd, where $M_{23}$ is the sporadic simple group of that name.

A key special case of Conjecture \ref{oddprimes} is where $D \cong C_p \times C_p$. There is a relative scarcity of computed examples when $p>3$, and we do not know of any examples in this case which make the upper bound sharp for $s>1$. Hence we ask the following:

\begin{question} Is there always a block $B$ of some finite group $G$ with defect group $C_p \times C_p$ and $\LL(B) = 2p+1$?
\end{question}

In~\cite{ko03} Koshitani describes the finite groups whose Sylow $3$-subgroups are abelian, and in~\cite{km01} it is shown that any principal $3$-block with abelian defect groups of order $3^2$ has Loewy length $5$ or $7$. Based on this we show the following, unfortunately involving further unpublished preprints. Note that at present it is not realistic to calculate the Loewy length for the principal block of the sporadic simple group $O'N$ (with elementary abelian Sylow $3$-subgroups of order $3^4$) by computer or otherwise. Conjecture \ref{oddprimes} predicts that the Loewy length lies between $5$ and $13$.

\begin{proposition}
Suppose that the Loewy length of the principal $3$-block of $O'N$ is at least $5$ and at most $13$. Let $G$ be a finite group with Sylow $3$-subgroup $D\cong C_{3^{a_1}} \times \cdots \times C_{3^{a_r}} \times (C_3)^s$, where $a_i > 1$ for all $i$ and $r \geq 0$, and let $B$ be the principal $3$-block of $G$. Write $d$ for the defect of $B$. Then $$d<\LL(B) \leq 3^{a_1}+\cdots+3^{a_r}+3s-r+1.$$
\end{proposition}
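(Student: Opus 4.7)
The plan is to parallel the proof of the $p=2$ main theorem, substituting Koshitani's classification \cite{ko03} of finite simple groups with abelian Sylow $3$-subgroup for Proposition~\ref{classification}.

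First I would apply Lemma~\ref{reduce}, keeping $B$ principal via part~(vi). Whenever $N\vartriangleleft G$ has $[G:N]=3$, quasi-primitivity together with \cite[\S15, Theorem~1(4)]{alp86} and Theorem~\ref{thm:indexp} force $B$ to cover a unique block $b$ of $N$ with $e_B=e_b$, and Theorem~\ref{thm:redp}(ii) then transfers the Loewy length upper bound from $b$ to $B$ with an additive loss of $|D_r|-|D_r|/3$; this is exactly the amount by which the right-hand side of the target inequality grows when the factor $D_r$ of $D$ is restored. A separate Morita-equivalence reduction handles normal subgroups of index prime to $3$, so iterating (and reapplying Lemma~\ref{reduce} as needed) yields both $G=O^3(G)$ and $G=O^{3'}(G)$; the Schreier conjecture then forces $G=E(G)$, a central product of quasisimple components $L_1,\dots,L_m$ that are normal in $G$ by Lemma~\ref{reduce}(v).

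Since $B$ is principal it is a (twisted) tensor product of the principal $3$-blocks $B_i$ of the $L_i$ and $D$ splits accordingly, so Lemma~\ref{product} reduces the problem to bounding $\LL(B_0(kL))$ for a single quasisimple $L$ with abelian Sylow $3$-subgroup $D_L$, the parameters $(a_i,r,s)$ now being read off $D_L$. The lower bound $d<\LL(B)$ is obtained factor-by-factor using \cite[2.8]{kks14} and transferred to $B$ via Lemma~\ref{product}, which requires strict inequality in only one factor.

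For the upper bound in this reduced situation I would consult Koshitani's classification \cite{ko03}. When $D_L$ is cyclic the Brauer-tree discussion following Conjecture~\ref{oddprimes} gives $\LL(B_0(kL))\le|D_L|$, within the target; when $D_L\cong C_3\times C_3$, \cite{km01} gives $\LL(B_0(kL))\in\{5,7\}$, within the bound $\le 7$. For the remaining entries of $3$-rank $\ge 3$ (a short list of sporadic and small Lie type simple groups) I would cite explicit Loewy-length computations from the literature case by case; the one entry currently resisting a direct calculation is the principal $3$-block of $O'N$ with $D_L\cong(C_3)^4$, and the hypothesis of the proposition supplies exactly $5\le\LL(B_0(kO'N))\le 13=3\cdot 4+1$. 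The main obstacle is precisely this last step: without an analogue of Proposition~\ref{classification} reducing the problem to a few generic families, the treatment must be case by case, and $O'N$ is the stubborn outlier that forces the statement to be conditional.
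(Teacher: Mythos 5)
Your plan takes a different and in places more laborious route than the paper, and it has one genuine gap.

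\textbf{Different route.} You parallel the $p=2$ main theorem: reduce to $G=O^3(G)=O^{3'}(G)$ via Theorems~\ref{thm:indexp} and~\ref{thm:redp}, then to $G=E(G)$ via the Schreier conjecture and Lemma~\ref{reduce}(v), and finally bound Loewy lengths component by component. The paper's proof bypasses all of this: after Lemma~\ref{reduce} and $O_{3'}(G)=1$, it applies Koshitani's structure theorem~\cite{ko03} (based on Fong~\cite{fo96}), which asserts outright that $G=O_3(G)\times G_1\times\cdots\times G_n$ is a genuine \emph{direct} product, with each $G_i$ a simple group appearing in an explicit list. Lemma~\ref{product} then applies immediately. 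Your approach, by contrast, lands on $E(G)$, which is in general only a \emph{central} product of components; Lemma~\ref{product} does not apply to central products, and you would still need the central-extension/Morita-equivalence bookkeeping that the $p=2$ proof of the main theorem carries out but which your outline omits. Koshitani's theorem makes that extra work unnecessary, which is exactly why the paper invokes it at the group level rather than at the component level.

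\textbf{Genuine gap.} You assert that after disposing of cyclic and $C_3\times C_3$ defect groups, what remains is ``a short list of sporadic and small Lie type simple groups'' to be handled by ``explicit Loewy-length computations from the literature case by case,'' with $O'N$ the only troublesome entry. This is not so. Koshitani's list contains several \emph{infinite} families of Lie type groups with Sylow $3$-subgroup $C_{3^a}\times C_{3^a}$ for arbitrary $a\geq 1$ (the groups $PSp_4(q)$, $PSL_4(q)$, $PSU_4(q^2)$, $PSL_5(q)$, $PSU_5(q^2)$ under suitable congruences on $q$, and $PSL_2(3^m)$ with elementary abelian Sylow $3$-subgroup of rank $m$). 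Explicit case-by-case computation is impossible here; one needs structural Morita equivalences. The paper invokes Puig's proof of Brou\'e's conjecture for certain unitary/symplectic families~\cite{pu90} (giving $\LL(B)=\LL(kD)=2\cdot 3^a-1$), an Okuyama--Kunugi derived/Morita equivalence~\cite{ko11} to split off a cyclic factor in the $PSp_4$ case, Turner's result~\cite{tu02} identifying weight-two unipotent blocks with the principal block of $GL_2(q)\wr S_2$ for the $(P)SL_4,(P)SL_5$ families, and the Andersen--J{\o}rgensen--Landrock computation~\cite{ajl83} for $PSL_2(3^m)$. Without these ingredients your outline cannot close the argument, and this --- not just $O'N$ --- is the real content of the upper bound.
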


\begin{proof}
We may assume that $B$ satisfies conditions (i) to (vi) of Lemma \ref{reduce}. Since $O_{3'}(G)$ is in the kernel of the principal block, we may further assume that $O_{3'}(G)=1$. By~\cite{ko03}, which is based on~\cite{fo96},
$$G=O_3(G) \times G_1 \times \cdots \times G_n$$ where $n \geq 0$ and each $G_i$ is either a non-abelian simple group with cyclic Sylow $3$-subgroups or is one of the following:

(i) $A_6$, $A_7$, $A_8$, $M_{22}$, $M_{23}$, $HS$, $O'N$;

(ii) $PSL_3(q)$ where $3|q-1$ but $3^2 \not| q-1$;

(iii) $PSU_3(q^2)$ where $3|q+1$ but $3^2 \not| q+1$;

(iv) $PSp_4(q)$ where $3|q-1$;

(v) $PSp_4(q)$ where $q>2$ and $3|q+1$;

(vi) $PSL_4(q)$ were $q>2$ and $3|q+1$;

(vii) $PSU_4(q^2)$ where $3|q-1$;

(viii) $PSL_5(q)$ where $3|q+1$;

(ix) $PSU_5(q^2)$ where $3|q-1$;

(x) $PSL_2(3^m)$ where $m \geq 2$.

By Lemma \ref{product} it suffices to check the inequalities for each factor group in turn. We have treated the case that $D$ is cyclic above, so it suffices to consider each of the cases (i)-(x) above in turn.

 (i)-(iii) In all cases except $O'N$ we have $D \cong C_3 \times C_3$ and by~\cite{km01} $\LL(B) = 5$ or $7$.

 (iv), (vii), (ix) In these cases $D \cong C_{3^a} \times C_{3^a}$ for some $a \geq 1$. It follows from~\cite[3.6]{pu90} that $B$ is Morita equivalent to the principal block of $N_G(D)$, and so by Lemma \ref{DconjugatesgenerateG} $\LL(B)=\LL(kD) = 2 \cdot 3^a - 1$.

 (v)  In this case also $D \cong C_{3^a} \times C_{3^a}$ for some $a \geq 1$. By~\cite{ko11} $B$ is Morita equivalent to the principal block of $N_G(Q)$ where $D=Q \times R$ and $|Q|=3^a$, and the result is true by induction.

 (vi), (viii) In these cases $\LL(B)$ is equal to the Loewy length of the principal block of the corresponding general linear group, and the weight of this block is two. By~\cite[Theorem 1]{tu02} this is Morita equivalent to the principal block of $GL_2(q) \wr S_2$. By Lemma \ref{DconjugatesgenerateG} and Lemma \ref{product} $\LL(B_0(k(GL_2(q) \wr S_2))) = 2 \LL(B_0(GL_2(q)) - 1$. Since $GL_2(q)$ has a cyclic Sylow $3$-subgroup we are done in this case.

 (x) By~\cite{ajl83} $\LL(B_0(PSL_2(3^m))) = 2m+1$ and we are done.

\end{proof}

Finally we prove the result concerning blocks of $p$-solvable groups with abelian defect groups mentioned in the introduction, which in particular implies that such blocks satisfy sharper bounds than in general.

\begin{proposition}
Let $B$ be a block of a $p$-solvable group $G$ with abelian defect group $D$. Then $\LL(B)=\LL(kD)$.
\end{proposition}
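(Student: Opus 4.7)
The plan is to apply the reduction machinery of Lemma~\ref{reduce} to replace $B$ by a Morita-equivalent block $C$ of an auxiliary group $H$, and then use $p$-solvability to collapse $H$ to an abelian group, from which the Loewy length can be read off directly.

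First I would verify that each of the three reduction processes in the proof of Lemma~\ref{reduce} preserves $p$-solvability. Processes (a) (Fong--Reynolds) and (b) simply pass to a subgroup, so clearly preserve the property. Process (c) replaces $G$ by a central $p'$-extension $\tilde{L}$ of a group $L$ which is itself an extension of a $p$-group by a quotient of $G$; each layer is $p$-solvable provided $G$ is, so $\tilde{L}$ is $p$-solvable as well. Hence Lemma~\ref{reduce} produces a $p$-solvable group $H$ and a block $C$ of $kH$ with defect group $D_C \cong D$, $\LL(C)=\LL(B)$, and satisfying conditions (i)--(v).

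Next I would force $H$ to be abelian. Because $p$-solvable groups have no nonabelian simple composition factors, they contain no components; thus $E(H)=1$. Combined with (iii) and (iv) of Lemma~\ref{reduce}, this gives
$$F^*(H)=F(H)=O_p(H)\times O_{p'}(H)\leq Z(H).$$
Since $C_H(F^*(H))\leq F^*(H)$ and $F^*(H)\leq Z(H)$, every element of $H$ centralizes $F^*(H)$, so $H=C_H(F^*(H))\leq F^*(H)$ and therefore $H=F^*(H)$ is abelian. Consequently $H=O_p(H)\times O_{p'}(H)$, and the defect group $D_C$, being the unique Sylow $p$-subgroup of the abelian group $H$, equals $O_p(H)$.

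Finally, because $O_{p'}(H)$ is an abelian $p'$-group, $kO_{p'}(H)$ is a direct sum of copies of $k$, which yields
$$kH \;\cong\; kO_p(H) \otimes_k kO_{p'}(H) \;\cong\; \bigoplus_{\chi} kO_p(H)\cdot e_\chi,$$
so every block of $kH$ is isomorphic as a $k$-algebra to $kO_p(H) \cong kD$. In particular $\LL(B)=\LL(C)=\LL(kD)$. The only genuinely nontrivial point is tracking $p$-solvability through process~(c) of Lemma~\ref{reduce}, and this reduces to the routine fact that central extensions of $p$-solvable groups by $p'$-groups are again $p$-solvable.
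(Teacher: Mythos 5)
Your overall strategy coincides with the paper's: apply Lemma~\ref{reduce} and then use $p$-solvability to force the reduced group to be (essentially) abelian, so that the block is just the group algebra of the defect group. The paper closes by citing the Hall--Higman type result \cite[6.3.2]{gor80} directly, whereas you go through $C_H(F^*(H))\le F^*(H)$; that is a perfectly reasonable alternative formulation. Your care in checking that processes (a)--(c) preserve $p$-solvability is also correct and worth noting, since the paper leaves this implicit.

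However, there is a genuine gap in the step where you conclude $E(H)=1$. The claim ``$p$-solvable groups have no nonabelian simple composition factors'' is false: $p$-solvability only requires each composition factor to be a $p$-group or a $p'$-group, and a nonabelian simple $p'$-group is permitted (e.g.\ $A_5$ is $7$-solvable). So $p$-solvable groups can certainly have components. What \emph{is} true, and what you need, is that a $p$-solvable \emph{quasisimple} group $L$ must be a $p'$-group: $L/Z(L)$ is nonabelian simple and hence a $p'$-group, so any Sylow $p$-subgroup $P$ of $L$ lies in $Z(L)$; by Schur--Zassenhaus the central extension $1\to P\to L\to L/P\to 1$ splits, forcing $L\cong P\times L/P$, which contradicts $L=[L,L]$ unless $P=1$. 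Consequently $E(H)$ is a normal $p'$-subgroup of $H$, so $E(H)\le O_{p'}(H)\le Z(H)$ by condition (iii) of Lemma~\ref{reduce}; since $E(H)$ is then abelian but is a central product of nonabelian quasisimple groups, $E(H)=1$. Once this repair is made, the rest of your argument goes through and reproduces the paper's conclusion.
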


\begin{proof}
By Lemma \ref{reduce} we may assume that $O_{p'}(G) \leq Z(G)$ and $C_G(O_p(G)) =G$, so by~\cite[6.3.2]{gor80} $G=C_G(O_p(G))  \leq O_p(G)Z(G)$ and the result follows.
\end{proof}


\end{document}